\newtheorem{theorem}{Theorem}
\newtheorem{lemma}[theorem]{Lemma}
\theoremstyle{remark}
\newtheorem*{remark}{Remark}
\numberwithin{theorem}{section} \numberwithin{equation}{section}
\newcommand{\calO}{\mathcal{O}}
\newcommand{\mfp}{\mathfrak{p}}
\newcommand{\mfb}{\mathfrak{b}}
\newcommand{\mfc}{\mathfrak{c}}
\newcommand{\R}{\mathbb{R}}
\newcommand{\C}{\mathbb{C}}
\newcommand{\F}{\mathbb{F}}
\newcommand{\Cl}{{\text {\rm Cl}}}
\newcommand{\Aut}{{\text {\rm Aut}}}
\newcommand{\Q}{\mathbb{Q}}
\newcommand{\Z}{\mathbb{Z}}
\newcommand{\SL}{{\text {\rm SL}}}
\newcommand{\GL}{{\text {\rm GL}}}
\newcommand{\textmod}{{\text {\rm mod}}}
\newcommand{\sgn}{\operatorname{sgn}}
\newcommand{\Gal}{{\text {\rm Gal}}}
\newcommand{\Disc}{\textnormal{Disc}}
\newcommand{\Stab}{\textnormal{Stab}}
\newcommand{\A}{\mathbb{A}}
\begin{document}
\title[Shintani's zeta function is not a finite sum of Euler products]
{Shintani's zeta function is not a finite sum of Euler products}

\author{Frank Thorne}
\address{Department of Mathematics, University of South Carolina, 1523 Greene Street, Columbia, SC 29201}
\email{thorne@math.sc.edu}

\begin{abstract}
We prove that the Shintani zeta function associated to the space of binary cubic forms cannot be written
as a finite sum of Euler products. Our proof also extends to several closely related Dirichlet series. This answers a
question of Wright \cite{W} in the negative.

\end{abstract}

\maketitle
\section{Introduction}
In this paper, we will prove that {\itshape Shintani's zeta function is not a finite sum of Euler products}.
We will also prove the same for the Dirichlet series counting cubic fields.

Our work is motivated by a beautiful paper of Wright \cite{W}. To illustrate Wright's work, we recall a
classical example, namely the Dirichlet series associated to fundamental discriminants. We
have
\begin{equation}
\sum_{D > 0} D^{-s} = \frac{1}{2} \bigg[ \big(1 - 2^{-s} + 2 \cdot 4^{-s}\big) \frac{\zeta(s)}{\zeta(2s)} +
\big( 1 - 4^{-s} \big) \frac{L(s, \chi_4)}{L(2s, \chi_4)}\bigg],
\end{equation}
as well as a similar formula for negative discriminants. These formulas may look a bit messy.
However, if one combines positive and negative discriminants, one has the beautiful formulas
\begin{equation}\label{eqn_beautiful}
\sum |D|^{-s} = \Big( 1 - 2^{-s} + 2 \cdot 4^{-s}\Big) \frac{\zeta(s)}{\zeta(2s)} =
\prod_p \bigg( \frac{1}{2} \sum_{[K_v : \Q_p] \leq 2} |\Disc(K_v)|_p^s \bigg),
\end{equation}
and
\begin{equation}\label{eqn_beautiful2}
\sum \sgn(D) |D|^{-s} = \Big( 1 - 4^{-s}\Big) \frac{L(s, \chi_4)}{L(2s, \chi_4)}.
\end{equation}
These are special cases of much more general results. Wright obtains similar formulas for quadratic
extensions of any global field $k$ of characteristic not equal to 2, which are in turn the case $n = 2$ of formulas
for Dirichlet series parameterizing the elements of $k^{\times}/(k^{\times})^n$. He proves his results by considering 
twists of the Iwasawa-Tate zeta function
\begin{equation}\label{eqn_it}
\zeta^{(n)}(\omega, \Phi) = \int_{\A^{\times} / k^{\times}} \omega(t) \sum_{x \in k^{\times}} \Phi(t^n x) |d^{\times} t|_{\A}.
\end{equation}
We will not explain Wright's notation here, but suffice it to say that the case $n = 1$ is the zeta function of
Tate's thesis \cite{tate}. This zeta function may be viewed as the zeta function associated to the affine line, viewed as a
prehomogeneous vector space of degree 1, on which $\GL(1)$ acts by $\phi(t) x = t^n x$. 
Generalizing \eqref{eqn_beautiful} and \eqref{eqn_beautiful2},
Wright proves 
that all of these zeta functions can be written as finite linear combinations of Euler products.
He further remarks that ``an analogue of [these formulas] for the space of binary cubic
forms is currently unknown; although, such a formula would be immensely interesting.''

Wright is referring to the {\itshape Shintani zeta function} associated to the space of binary cubic forms.
This zeta function was introduced by Shintani \cite{S} and further studied by Datskovsky and Wright \cite{DW1, DW2, DW3} and 
many others. This zeta function is defined as follows:
The lattice of {\itshape integral binary cubic forms} is defined by
\begin{equation}\label{def_vz}
V_{\Z} := \{ a u^3 + b u^2 v + c u v^2 + d v^3 \ : a, b, c, d \in \Z \},
\end{equation}
and the {\itshape discriminant} of such a form is given by the usual equation
\begin{equation}\label{eqn_disc_formula}
\Disc(f) = b^2 c^2 - 4 a c^3 - 4 b^3 d - 27 a^2 d^2 + 18 abcd.
\end{equation}
There is a natural action of $\GL_2(\Z)$ (and also of $\SL_2(\Z)$) on $V_{\Z}$, given by
\begin{equation}
(\gamma \cdot f)(u, v) = \frac{1}{\det \gamma} f((u, v) \cdot \gamma).
\end{equation}
The {\itshape Shintani zeta functions} are given by the Dirichlet series
\begin{equation}\label{eqn_def_shintani}
\xi^{\pm}(s) := \sum_{\substack{x \in \SL_2(\Z) \backslash V_{\Z} \\ \pm \Disc(x) > 0}} \frac{1}{|\Stab(x)|} |\Disc(x)|^{-s},
\end{equation}
and Shintani proved that they have analytic continuation to $\C$ and satisfy a functional equation.
These zeta functions are interesting for a variety of reasons; see, e.g., \cite{TT} and \cite{DW3} for
applications to counting cubic extensions of number fields. Also note that Datskovsky and Wright's
work yields an adelic formulation of Shintani's zeta function, similar to \eqref{eqn_it}.

Some further motivation for Wright's question is provided by work of Ibukiyama and Saito \cite{IS}.
They consider the zeta functions associated to the prehomogeneous vector spaces of $n \times n$
symmetric matrices, for $n > 3$ odd. In particular they prove, among many other interesting results, 
explicit formulas for these zeta functions as sums of two products of the Riemann zeta function. 

With these results in mind, one might hope to prove similar such formulas 
for the zeta functions \eqref{eqn_def_shintani}. In this note we answer Wright's
question in the negative, and prove that no such formulas exist.
\begin{theorem}\label{thm_main}
Neither of the Shintani zeta functions $\xi^{\pm}(s)$ defined in \eqref{eqn_def_shintani} admits a
representation as a finite sum of Euler products.
\end{theorem}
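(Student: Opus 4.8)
The plan is to isolate the one analytic feature of $\xi^\pm(s)$ that no Euler product can reproduce. Shintani's continuation yields simple poles of $\xi^\pm(s)$ not only at $s=1$ but also at $s=5/6$, with nonzero residue; this second pole is precisely what produces the secondary term in the count of cubic fields. The guiding principle is that $5/6$ is not of the form $1/k$, whereas a finite sum of Euler products of the shape occurring in Wright's formulas can have poles, in the region $\re(s)>1/2$, only at $s=1$. Granting such a statement, a hypothetical identity $\xi^\pm(s)=\sum_{\ell=1}^{J}\prod_p E_{\ell,p}(p^{-s})$ would force each summand, and hence the sum, to be holomorphic at $s=5/6$, contradicting the pole. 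The theorem thus reduces to controlling the poles of admissible Euler products.

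To execute this I would first fix the admissible class in line with Wright's examples: each local factor $E_{\ell,p}(x)=1+\sum_{k\ge 1}c_{\ell,p,k}x^k$ has uniformly bounded coefficients, with $c_{\ell,p,k}$ depending on $p$ only through Frobenius data in a fixed finite collection of fields. Writing $\log\prod_p E_{\ell,p}(p^{-s})=\sum_{k\ge 1}\sum_p d_{\ell,p,k}\,p^{-ks}$, the terms with $k\ge 2$ converge absolutely for $\re(s)>1/2$ and are harmless near $s=5/6$. Grouping the $k=1$ primes by Frobenius class and applying Chebotarev, $\sum_p d_{\ell,p,1}\,p^{-s}$ is a $\Q$-linear combination of $\log\zeta(s)$ and $\log L(s,\chi)$ for finitely many $\chi$, up to a holomorphic error. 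The trivial character contributes the logarithmic pole at $s=1$; any other real singularity in $(1/2,1)$ would require a real zero of some $L(s,\chi)$. As $\zeta(5/6)\neq 0$, exponentiation shows each factor is holomorphic at $s=5/6$ unless some $L(5/6,\chi)=0$. This pole-location analysis is the technical heart, and the step I expect to be the main obstacle: one must pin the meaning of ``Euler product'' down narrowly enough to forbid pathological local factors such as a term $p^{4}p^{-6s}$ (which would manufacture a pole precisely at $5/6$), yet keep the statement meaningful, and one must exclude an accidental real zero $L(5/6,\chi)=0$ feeding a pole through a factor $L(s,\chi)^{-1}$.

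A second, definition-robust route argues at the level of Dirichlet coefficients. Writing $\xi^\pm(s)=\sum_n a^\pm(n)n^{-s}$, any genuine Euler product has multiplicative coefficients, so a sum of $J$ of them forces every two-prime array $\big(a^\pm(p^iq^j)\big)_{i,j\ge 0}$ to have rank at most $J$. By the Delone--Faddeev correspondence the $a^\pm(n)$ count cubic rings of discriminant $\pm n$, and at fundamental discriminants these counts are governed by $\#\Cl(\Q(\sqrt{\pm n}))[3]$. The remaining task is to show that these arrays have unbounded rank; the essential input is that the $3$-torsion of quadratic class groups is genuinely non-local --- detectable, for instance, through cubic-residue conditions among the primes dividing the discriminant, which produce arrays of arbitrarily large rank. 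I would keep this route in reserve: its conclusion is insensitive to how one defines an Euler product, but establishing the unbounded rank rigorously is considerably harder than the pole computation.
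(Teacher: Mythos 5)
Your first route cannot prove the theorem as stated, and the obstacle you yourself flag is not a technicality but the whole issue. The theorem's notion of ``Euler product'' is a Dirichlet series with multiplicative coefficients, with no growth or Frobenius-type restriction on the local factors; under that definition $\zeta(6s-4)=\prod_p(1-p^{4}p^{-6s})^{-1}$ is a perfectly good Euler product with a pole exactly at $s=5/6$, and products and finite sums of such things can have poles at essentially any prescribed point in $\re(s)>1/2$. (The Datskovsky--Wright identity \eqref{eqn_dw2} already exhibits shifted factors like $\zeta(6s-1)$.) So the pole of $\xi^{\pm}(s)$ at $s=5/6$ rules out nothing unless you first restrict to a narrow ``admissible'' class of local factors, at which point you have proved a strictly weaker statement than Theorem \ref{thm_main}. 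This route cannot be repaired without changing the theorem.

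Your second route is the correct skeleton, and it is essentially the linear-algebra principle the paper's proof rests on: if $a(n)=\sum_{\ell=1}^{J}c_\ell b_\ell(n)$ with each $b_\ell$ multiplicative, then any array $\bigl(a(m_r u_i)\bigr)_{r,i}$ with $(m_r,u_i)=1$ is a sum of $J$ rank-one matrices and hence has rank at most $J$. But all of the content lies in exhibiting an explicit array of rank greater than $J$, which is precisely the step you defer. Two concrete problems: first, your proposed array $\bigl(a(p^iq^j)\bigr)_{i,j}$ over powers of two fixed primes forces you to control $a$ at high prime powers, where nonmaximal cubic rings accumulate and the entries are hard to decouple; the workable choice is many pairwise (almost) coprime fundamental discriminants $n_1,\dots,n_J$ against many primes $p_1,\dots,p_J$, i.e.\ the array $\bigl(a(n_rp_i^2)-a(n_r)\bigr)_{r,i}$. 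Second, the input that makes such an array visibly of full rank is not the $3$-torsion of quadratic class groups per se but the Cohen--Morra formula for $\sum_n a(Dn^2)n^{-s}$ (Theorem \ref{thm_cm}): for suitable $D$ the existence of a cubic field of discriminant $Dp^2$ is governed by the splitting of $p$ in a fixed cubic field attached to $D$ --- a genuinely non-abelian condition --- and Chebotarev then lets one choose the $p_i$ so that the array is diagonal with positive diagonal. One also needs Davenport--Heilbronn in arithmetic progressions to arrange $a(n_r)=0$, and one must first subtract $\zeta(s)$ (or $C_2\zeta(s)$) to remove the quadratic contribution. Without these inputs the ``arrays of arbitrarily large rank'' claim is an assertion, not a proof.
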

In other words, if we write $\xi^{\pm}(s) = \sum_n a(n) n^{-s}$, we cannot write $a(n) = \sum_{i = 1}^k c_i b_i(n)$
for real numbers $c_i$ and multiplicative functions $b_i(n)$.

It is interesting to note that these zeta functions do have representations as infinite sums of Euler
products. Of course, there is a tautological such representation, writing
$n^{-s} = (1 + n^{-s}) - 1$
and thus regarding each term in \eqref{eqn_def_shintani} as a difference of Euler products.
However, Datskovsky and Wright \cite{DW2} proved the much more interesting formula
\begin{equation}\label{eqn_dw2}
\xi^{\pm}(s) = \zeta(4s) \zeta(6s - 1) \sum_k \frac{2}{o(k)} |\Disc(k)|^{-s} \frac{R_k(2s)}{R_k(4s)},
\end{equation}
where the sum is over all number fields of degree 1, 2, or 3 (up to isomorphism) of the correct sign; $o(k)$ is equal to
6, 2, 1, or 3 when $k$ is trivial, quadratic, cubic and non-Galois, or cubic and cyclic respectively;
and $R_k(s)$ is equal
to $\zeta(s)^3, \zeta(s) \zeta_k(s), \zeta_k(s), \zeta_k(s)$ respectively.

The proof of Theorem \ref{thm_main} is not difficult, and it illustrates an application
of recent work of Cohen and Morra (\cite{CM}; see also Morra's thesis \cite{M} for a longer version with more examples). 
For a fundamental discriminant $D$, 
they prove explicit formula for the Dirichlet series
\begin{equation}
\Phi_D(s) := \sum_n a(Dn^2) n^{-s},
\end{equation}
where $a(Dn^2)$ is the number of cubic fields of discriminant $D n^2$. More generally, they prove
a formula for the Dirichlet series counting cubic extensions of number fields 
$K/k$, such that the normal
closure of $K$ contains a fixed quadratic subextension $K_2/k$. The formula is rather complicated, and it
involves sums over characters of the 3-part of ray class groups associated to $\Q(\sqrt{D}, \sqrt{-3})$
(or more generally to $K_2(\sqrt{-3})$). However, these formulas take much simpler forms when we assume $k = \Q$, and when
we further restrict
to $D$ for which we can control these class groups.

We will apply the following special case of their result:

\begin{theorem}[Cohen-Morra \cite{CM}]\label{thm_cm}
If $D < 0$, $D \equiv 3 \ (\textmod \ 9)$, and $3 \nmid h(D)$, then we have
\begin{equation}\label{eqn_cm_im}
\sum_n a(Dn^2) n^{-s} = - \frac{1}{2} + \frac{1}{2} \bigg(1 + \frac{2}{3^s} \bigg)
\prod_{ \left( \frac{ -3D}{p} \right) = 1 } \bigg(1 + \frac{2}{p^s} \bigg).
\end{equation}
Furthermore, if $D > 0$, $D \equiv 3 \ (\textmod \ 9)$, 
and $3 \nmid h(-D/3)$, then there exists a cubic field $E$ of discriminant $-27 D$ such that
\begin{multline}\label{eqn_cm_re}
\sum_n a(Dn^2) n^{-s} =  - \frac{1}{2} + \frac{1}{6} \bigg(1 + \frac{2}{3^s} \bigg)
\prod_{ \left( \frac{ -3D}{p} \right) = 1 } \bigg(1 + \frac{2}{p^s} \bigg)
\\ 
+ \frac{1}{3} \bigg(1 - \frac{1}{3^s} \bigg)
\prod_{ \left( \frac{ -3D}{p} \right) = 1 } \bigg(1 + \frac{\omega_D(p)}{p^s} \bigg), \ \ \ \ \  \ \ \
\end{multline}
where $\omega_D(p) = 2$
if $p$ splits completely in $E$, and $\omega_D(p) = -1$ otherwise. 
\end{theorem}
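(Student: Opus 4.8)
\emph{Strategy.} Since this is the specialization of the Cohen--Morra counting theorem to a favorable family of $D$, the plan is to reconstruct the relevant part of their argument and then impose the hypotheses to collapse the general formula to the stated shape. First I would set up the class field theory parametrization. A non-cyclic cubic field $K$ of discriminant $Dn^2$, with $D$ the fundamental discriminant of its quadratic resolvent, has Galois closure $\widetilde{K}/\Q$ with group $S_3$, and $\widetilde{K}$ is a cyclic cubic extension of $k=\Q(\sqrt{D})$ on which $\Gal(k/\Q)$ acts by inversion. Since $k$ does not contain the cube roots of unity, I would pass to $K_6=k(\sqrt{-3})=\Q(\sqrt{D},\sqrt{-3})$, whose Galois group over $\Q$ is $(\Z/2\Z)^2$, and carry out Kummer theory there while tracking the eigenspace decomposition under $\Gal(K_6/\Q)$. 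This realizes the isomorphism classes of such $K$ as pairs $\{\chi,\chi^{-1}\}$ of cubic characters of a ray class group of $k$ (equivalently, of a suitable $\Gal$-eigenspace of a ray class group of $K_6$) satisfying the anti-invariance condition $\chi^{\sigma}=\chi^{-1}$, where $\sigma$ generates $\Gal(k/\Q)$. The conductor--discriminant formula then reads off $n^2$ from the conductor of $\chi$.

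Next I would convert the count into a Dirichlet series. Writing $\sum_n a(Dn^2)n^{-s}$ as a sum over admissible characters $\chi$, each $\chi$ contributes an Euler product whose local factor at a prime $p$ depends only on the splitting type of $p$ in $k$ and on the local component of $\chi$; applying orthogonality over the relevant finite group turns $\sum_\chi$ into product factors $\prod_p\bigl(1+\omega(p)/p^s\bigr)$, where the restriction to primes with $\left(\frac{-3D}{p}\right)=1$ is exactly the splitting condition attached to the Kummer/Frobenius data. The distinction between the factor $(1+2/3^s)$ and $(1-1/3^s)$ at the prime $3$, together with the rational constants $-\tfrac12,\tfrac12,\tfrac16,\tfrac13$, would emerge from the local analysis at the ramified primes $p\mid 3D$, combined with the factor $\tfrac12$ that identifies $\chi$ with $\chi^{-1}$ and a correction subtracting the contribution of the trivial character.

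Finally I would use the hypotheses to pin down the character group. The conditions $3\nmid h(D)$ when $D<0$ and $3\nmid h(-D/3)$ when $D>0$, fed through the Scholz reflection principle relating the $3$-ranks of $\Cl(\Q(\sqrt{D}))$ and $\Cl(\Q(\sqrt{-3D}))$, force the $3$-part of the relevant ray class group to be as small as the ramification at $3$ permits. In the imaginary case this leaves only the characters arising from the $\sqrt{-3}$-ramification, which produces the single main Euler product and the constant $-\tfrac12$. In the real case the reflection principle permits one extra unramified anti-invariant cubic character; this character cuts out precisely a cubic field $E$, which one checks has discriminant $-27D$, and its Euler product is the third term, with $\omega_D(p)=2$ or $-1$ according to whether $p$ splits completely in $E$.

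I expect the main obstacle to be the local computation at the primes dividing $3D$, and in particular at $p=3$ where $\sqrt{-3}$ ramifies, since obtaining the exact factors $(1+2/3^s)$ and $(1-1/3^s)$ and the precise rational constants requires a careful conductor analysis and an exact determination of the admissible local characters, not merely a count of them. Matching the extra character in the real case to the field $E$ through reflection, and verifying $\Disc(E)=-27D$, is the other delicate point.
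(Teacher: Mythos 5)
Your skeleton is the paper's skeleton: the paper also obtains \eqref{eqn_cm_re} by specializing Cohen--Morra's machinery --- the Kummer-theoretic parametrization of cubic fields by elements of $(L^{\times}/(L^{\times})^3)[T]$ over $L=\Q(\sqrt{D},\sqrt{-3})$, and their main theorem expressing $\sum_n a(Dn^2)n^{-s}$ as a character sum over $G_{\mfb}=\big(\Cl_{\mfb}(L)/\Cl_{\mfb}(L)^3\big)[T]$ yielding Euler products. The difference is that the paper does not reconstruct any of this: \eqref{eqn_cm_im} is cited directly as Corollary 7.9 of \cite{CM}, and the group orders ($G_{\mfb}$ trivial for all relevant $\mfb$ except order $3$ at $\mfb=(3\sqrt{-3})$ when $D>0$) are quoted from their Remark 7.8. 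The genuine content of the paper's proof lies in the one step you leave as an assertion, and your sketch has a concrete error at exactly that point. You claim the real case contributes ``one extra \emph{unramified} anti-invariant cubic character,'' which is both wrong and inconsistent with your own statement that $\Disc(E)=-27D$: the extra character has conductor $(3\sqrt{-3})$ and is ramified at $3$, and an unramified such character would cut out a cubic field of discriminant $-D/3$, i.e.\ an unramified cubic extension of $\Q(\sqrt{-3D})$, which is precisely what the hypothesis $3\nmid h(-D/3)$ forbids. The paper nails the discriminant by elimination: $\Disc(E)=r^2(-D/3)$ with $r$ a power of $3$; $r\neq 1$ by the class number hypothesis, and $r\neq 3$, $27\nmid r$ because the $3$-adic valuation of a cubic field discriminant is never $2$ nor larger than $5$; hence $r=9$.

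The second gap is the identification $\omega_{\chi}(p)=\omega_D(p)$ for the nontrivial characters $\chi$ of $G_{(3\sqrt{-3})}$, which you simply assert (``this character cuts out precisely a cubic field $E$ \dots according to whether $p$ splits completely in $E$''). This is the step the paper singles out as requiring a substantial argument. One must construct, via class field theory, the extension $E_1/L$ with $\Gal(E_1/L)\simeq G_{\mfb}$, show $E_1/\Q$ is Galois with group $S_3\times C_2$ (because $\tau$ and $\tau_2$ act nontrivially on $G_{\mfb}$, forcing a lift of $\tau\tau_2$ to centralize $\Gal(E_1/L)$), extract the cubic $E$ with quadratic resolvent $\Q(\sqrt{-3D})$, and then verify the Frobenius matching separately in the two splitting cases $p\calO_L=\mfc\,\tau'(\mfc)$ with $\mfc$ prime or $\mfc=\mfp\,\tau\tau_2(\mfp)$, using that $\chi((p))=1$ makes $\chi(\mfc)$ and $\chi(\tau'(\mfc))$ complex conjugates, so that $\omega_{\chi}(p)\in\{2,-1\}$ according as $\chi(\mfc)=1$ or not. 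Relatedly, your appeal to Scholz reflection does not by itself determine the orders of the ray class eigenspaces mod $(3\sqrt{-3})$ --- that requires the unit and local analysis at $3$ carried out in \cite{CM} --- though you do honestly flag the computation at $3$ as the main obstacle.
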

\begin{remark}
In fact, $p$ splits completely in $E$ if and only if it does so in its Galois closure
$E(\sqrt{-3D})$, and this version of the condition will be more convenient in our proof.
\end{remark}

The equation \eqref{eqn_cm_im} is Corollary 7.9 of \cite{CM}. The analogue \eqref{eqn_cm_re}
does not appear in \cite{CM, M}, so in Section \ref{sec_cm} we describe how this formula follows from Cohen and Morra's
work. Moreover, in forthcoming work with Cohen \cite{CT} we will generalize \eqref{eqn_cm_re} to any $D$, positive or negative,
and we will also prove an analogous formula for {\itshape quartic} fields having fixed {\itshape cubic} resolvent.

We can now summarize our proof of Theorem \ref{thm_main}, in the
easier negative discriminant case. First observe that $\sum_n a(n) n^{-s} := \xi^-(s) - \zeta(s)$
cannot be an Euler product: By \eqref{eqn_cm_im}, we may find a negative 
$D \equiv 3 \ (\textmod \ 9)$ and many primes $p$ for which $a(-D) = 0$
and $a(-D p^2) > 0$.

To prove that
that $\xi^-(s) - \zeta(s)$ is not a sum of $k$ Euler products, for any $k > 1$, we choose appropriate
discriminants $D_i$ and primes $p_i$ such that
$a(-D_i p_j^2) > 0$ if and only if $i = j$. Then $a(n)$ ``takes too many parameters to describe,''
and we will formalize this using an elementary linear algebra argument.

The argument for the positive discriminant case is mildly more complicated, but essentially the same.
\\
\\
{\bf Closing remarks.} Although we cheerfully admit that a positive result would be more interesting than the present paper,
we submit that our negative result is interesting as well.

One motivation for our work comes from our previous paper \cite{T}, where we studied the Shintani zeta functions analytically,
obtaining (limited) results on the location of
the zeroes. These zeta functions essentially fit into the Selberg class, which naturally leads to a host of questions.
For example, do most of the zeroes of the Shintani zeta functions
lie on the critical line? Work of Bombieri and Hejhal \cite{BH} establishes that this is true for certain finite sums of $L$-functions,
conditional on standard hypotheses for these $L$-functions (including GRH).
As Shintani's zeta functions lie outside the scope of \cite{BH},
even a good conjecture would be extremely interesting.

Our work was also motivated by the desire to incorporate recent developments in the study of cubic fields into the study
of the Shintani zeta function. In addition to the work of Cohen and Morra, we point out the work of 
Bhargava, Shankar, and Tsimerman \cite{BST}, Hough \cite{H}, and Zhao \cite{Z} for further approaches to related questions.
We write this paper in optimism that further connections will be found among the various approaches to the subject.

\section{Acknowledgments}
I would like to thank Takashi Taniguchi, Simon Rubinstein-Salzedo, and the referee for useful comments, and 
Yasuo Ohno
for pointing out the reference \cite{IS} to me, which indirectly motivated this paper.
Finally, I would like to thank Henri Cohen and Anna Morra for answering my many questions about their interesting work.

\section{Proof of Theorem \ref{thm_cm}}\label{sec_cm}
In this section, we briefly describe the work of Cohen and Morra \cite{CM, M}, and discuss how 
\eqref{eqn_cm_re} follows from their more general results.\footnote{Our specific references are to the published paper 
\cite{CM}, but we also recommend \cite{M} for its enlightening additional explanation and examples.} 

In a followup paper with Cohen \cite{CT}, we will extend Theorem \ref{thm_cm} and the proof given here to cover the case where $D$
is any fundamental discriminant. The original work of Cohen and Morra is more general still, enumerating relative cubic extensions
of any number field. The new ingredient here (and in \cite{CT}) is that the Cohen-Morra formulas are somewhat abstract, and it is not obvious that they yield
explicit formulas like \eqref{eqn_cm_re}.

Suppose then that $K/\Q$ is a cubic field of discriminant $D n^2$, where $D \not \in \{ 1, -3 \}$ is a fundamental discriminant, and 
let $N$ be the Galois closure of $K$. Then $N(\sqrt{-3})$ is a cyclic cubic extension of $L := \Q(\sqrt{D}, \sqrt{-3})$,
and Kummer theory implies that $N(\sqrt{-3}) = L(\alpha^{1/3})$ for some $\alpha \in L$. We write (following \cite[Remark 2.2]{CM})
\begin{equation}
\Gal(L/\Q) = \{1, \ \tau, \ \tau_2, \ \tau \tau_2 \},
\end{equation}
where $\tau, \ \tau_2, \ \tau \tau_2$ fix $\sqrt{D}, \ \sqrt{-3}, \ \sqrt{-3D}$ respectively.

The starting point of \cite{CM}
is a correspondence between such fields $K$ and such elements $\alpha$.
In particular, isomorphism classes of such $K$ are in bijection with equivalence classes of elements $1 \neq \overline{\alpha} \in L^{\times}/(L^{\times})^3$,
with $\alpha$ identified with its inverse,
such that $\alpha \tau'(\alpha) \in (L^{\times})^3$ for $\tau' \in \{ \tau, \tau_2 \}$. 
We say (as in \cite[Definition 2.3]{CM}) that $\overline{\alpha} \in (L^{\times}/(L^{\times})^3)[T]$, where
$T \subseteq \F_3[\Gal(L/\Q)]$ is defined by $T = \{ \tau + 1, \tau_2 + 1 \}$, and the notation $[T]$ means
that $\overline{\alpha}$ is annihilated by $T$. This bijection opens the door to Cohen-Morra's
further study of such $\overline{\alpha}$ in terms of the ideals of $L$.

Assume that $D \equiv 3 \ (\textmod \ 9)$, placing us in case (5) of \cite[p. 464]{CM}. The 
formula \eqref{eqn_cm_im} is Corollary 7.9 of \cite{CM}, and so we assume that $D > 0$, for which
the conditions for \eqref{eqn_cm_re} are discussed in
Remark 7.8. Define $G_{\mfb}$, as in \cite[Theorem 6.1]{CM}, to be
$\big(\Cl_{\mfb}(L)/\Cl_{\mfb}(L)^3\big)[T],$ where $\Cl_{\mfb}(L)$ is the ray class group of an ideal $\mfb$.
By Remark 7.8, $G_{\mfb}$ is of order 1 (for all $\mfb$ considered in Theorem 6.1), except when $\mfb = (3\sqrt{-3})$, 
one case where it has order 3. 

In this setting, the main theorem (Theorem 6.1 of \cite{CM}) reduces to a formula of the shape
\begin{equation}\label{eqn_CM61}
\sum_n a(D n^2) n^{-s} = -\frac{1}{2} + \frac{1}{2} \sum_{\mfb \in \mathcal{B}} A_{\mfb}(s)
\sum_{\chi \in \widehat{G_{\mfb}}} \omega_{\chi}(3) \prod_{ \big( \frac{-3D}{p} \big) = 1}
\bigg(1 + \frac{ \omega_{\chi}(p)}{p^s} \bigg),
\end{equation} 
where for $p \neq 3$ we have
$$\omega_{\chi}(p)=\begin{cases}
2&\text{\quad if $\chi(p\mfc)=\chi(p\tau(\mfc))$}\;,\\
-1&\text{\quad if $\chi(p\mfc)\ne\chi(p\tau(\mfc))$}\;.\end{cases}$$
We will define $\mathfrak{c}$ later, but we leave the definitions of the other quantities to \cite{CM, M} (see also
our forthcoming followup \cite{CT}). Most of the computations needed to verify \eqref{eqn_cm_re} are either
completely straightforward or carried out in \cite{CM, M}. In particular, the contributions of the trivial characters
are computed there. The one remaining step requiring a substantial argument is
to prove that for each nontrivial character $\chi$ of $G_{(3\sqrt{-3})}$, 
we have $\omega_{\chi}(p) = \omega_D(p)$.
This involves a bit of class field theory, and we give the proof here.

Define\footnote{We have followed the notation of \cite{CM} where practical, but the notations
$G'_{\mfb}, \ G''_{\mfb}, \ E, \ E_1, \omega_D(p)$, and some of the notation appearing in \eqref{eqn_CM61} are used for the first time here and do not
appear in \cite{CM}.} $G'_{\mfb} := \Cl_{\mfb}(L)/\Cl_{\mfb}(L)^3$, so that $G'_{\mfb}$ is
a 3-torsion group containing $G_{\mfb}$. We have a canonical decomposition of $G'_{\mfb}$ into four
eigenspaces for the actions of $\tau$ and $\tau_2$, and we write
\begin{equation}\label{eqn_nci}
G'_{\mfb} \simeq G_{\mfb} \times G''_{\mfb}
\end{equation}
where $G''_{\mfb}$ is the direct sum of the three eigenspaces other than $G_{\mfb}$. Note that $G''_{\mfb}$
will contain the classes of all principal ideals generated by rational integers coprime to $3$; any such class
in the kernel of $T$ will necessarily be in $\Cl_{\mfb}(L)^3$. We may thereby write
$G_{\mfb} \simeq \Cl_{\mfb}(L)/H$, where $H$ is of index $3$ and contains the classes of all principal ideals
coprime to $3$.

By class field theory, there is a unique abelian extension $E_1/L$ for which the Artin map induces an isomorphism
$G_{\mfb} \simeq \Gal(E_1/L)$. It must be cyclic cubic, as $G_{\mfb}$ is, and the uniqueness forces $E_1/L$ to be 
Galois over $\Q$, as the group $G_{\mfb}$ is preserved by $\tau$ and $\tau_2$ and hence all of $\Gal(L/\Q)$.
We have $\Gal(E_1/\Q) \simeq S_3 \times C_2$: $\tau$ and $\tau_2 \in \Gal(L/\Q)$ both act nontrivially on $G_{\mfb}$,
and under the Artin map this implies that $\tau, \tau_2$ both act  nontrivially on $\Gal(E_1/L)$ by conjugation. This forces
$\tau \tau_2$ 
to commute\footnote{More properly speaking, we choose an arbitrary lift of $\tau \tau_2$ to $\Gal(E_1/\Q)$ and denote it also by
$\tau \tau_2$; it is this lift which commutes with $\Gal(E_1/L)$.}
with $\Gal(E_1/L)$. As $\tau \tau_2$ fixes $\Q(\sqrt{-3D})$, this implies that $E_1$ contains
a cubic extension $E/\Q$ with quadratic
resolvent $\Q(\sqrt{-3D})$, which is unique up to isomorphism. Any prime $p$
which splits in $\Q(\sqrt{-3D})$ must either be inert or totally split in $E$.

For each prime $p$ with $\big( \frac{-3D}{p} \big) = 1$, write $p \calO_L = \mfc \tau'(\mfc)$, where $\tau'$ is the element
$\tau$ or $\tau_2$ of $\Gal(L/\Q)$ described previously. Here $\mfc$ is either prime or a product of two primes $\mfp \tau \tau_2(\mfp)$ 
depending on whether
$\big( \frac{D}{p} \big)$ is $-1$ or $1$.
For each such $p$,
$\omega_{\chi}(p) = 2$ if $\chi(p \mfc) = \chi(p \tau'(\mfc))$, and $\omega_{\chi}(p) = -1$ if  $\chi(p \mfc) \neq \chi(p \tau'(\mfc))$.
With the isomorphism $G_{\mfb} \simeq \Cl_{\mfb}(L)/H$, we have $\chi((p)) = 1$, so that $\chi(\mfc)$ is well defined
and equal to $\chi(p \mfc)$ (and similarly for $\chi(\tau'(\mfc))$).
Therefore, the computation
$\chi(\mfc) \chi(\tau'(\mfc)) = \chi((p)) = 1$ implies
that $\chi(\mfc)$ and $\chi(\tau'(\mfc))$ are complex conjugates, and thus
$\omega_{\chi}(p)$ is $2$ or $-1$ depending on whether $\chi(\mfc) = 1$ or not.

We claim that that $\chi(\mfc) = 1$ if and only if $p$ splits completely in $E$. Suppose first that
$\mfc$ is prime in $\calO_L$. Then $\chi(\mfc) = 1$ if and only if $\mfc$ splits completely in
$E_1/L$, in which case $(p)$ splits into six ideals in $E_1$, which happens
if and only if $p$ splits completely
in $E$.

Suppose now that $\mfc = \mfp \tau \tau_2(\mfp)$ in $L$. We claim that $\chi(\mfc) = 1$ if and only if
$\chi(\mfp) = 1$; this follows 
as $\mfp$ and $\tau \tau_2(\mfp)$ have the same Frobenius element in $E_1/L$ (which in turn is true because
they represent the same element of $G_{\mfb}$), and $\chi(\mfc) = \chi(\mfp \tau \tau_2(\mfp)) = 1$.
Now $\chi(\mfp) = 1$ if and only if $\mfp$ splits completely in $E_1/L$, in which case $(p)$ splits into twelve ideals
in $E_1$; for this it is necessary and sufficient that $p$ split completely in $E$.

This proves that $\omega_{\chi}(p) = \omega_D(p)$, as desired, provided that we verify that the discriminant of $E$ is as claimed. As its quadratic resolvent is
$\Q(\sqrt{-3D})$, we observe that  $\Disc(E) = r^2 (-D/3)$ for some integer $r$ divisible only by $3$ and prime divisors of $D$.
No prime $\ell > 3$ can divide $r$, because $\ell^3$ cannot divide the discriminant of any cubic field. Similarly $2$
cannot divide $r$, as if $2 | D$ then $4 | D$, but 16 cannot divide the discriminant of a cubic field. Therefore $r$ must
be a power of 3. We cannot have $r = 1$, as $E(\sqrt{-3D})/\Q(\sqrt{-3D})$ would be an unramified cubic extension,
but $h(-D/3) = 1$. $r$ cannot be $3$, nor can we have $27 | r$, as the $3$-adic valuation of a cubic field discriminant
is never $2$ or larger than $5$. By process of elimination, $r = 9$.

\begin{remark}
Belabas \cite{bel} has computed a table of all cubic fields $K$ with $|\Disc(K)| < 10^6$, and
we used his table and PARI/GP \cite{pari} to double-check \eqref{eqn_cm_re}. 
\end{remark}

\section{Proof of Theorem \ref{thm_main}}
We will require the following lemma, which we extract from a generalization of the Davenport-Heilbronn theorem \cite{DH}.
\begin{lemma}\label{lem_dh}
Given any residue class $a \ (\textmod \ 36m)$ with $a \equiv 21 \ (\textmod \ 36)$ or $a \equiv 5 \ (\textmod \ 12)$, and $(6a, m) = 1$, 
there exist infinitely many negative fundamental discriminants $n \equiv a \ (\textmod \ 36m)$ which are not discriminants of cubic fields.
\end{lemma}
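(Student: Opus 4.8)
The plan is to reduce the lemma to a statement about $3$-divisibility of class numbers and then feed it into a Davenport--Heilbronn count with congruence conditions. First I would record the reduction. For a fundamental discriminant $n$, every cubic field of discriminant exactly $n$ is non-Galois with quadratic resolvent $\Q(\sqrt{n})$: a cyclic cubic field has square discriminant, and writing $\Disc(K) = \Disc(\Q(\sqrt{n}))\,f^2$ forces the conductor $f = 1$ since $n$ is fundamental. By class field theory such fields correspond to unramified cyclic cubic extensions of $\Q(\sqrt{n})$, equivalently to index-$3$ subgroups of the class group $\Cl(\Q(\sqrt{n}))$. Hence $n$ is the discriminant of a cubic field if and only if $3 \mid h(n)$, and the lemma becomes the assertion that there are infinitely many negative fundamental discriminants $n \equiv a \ (\textmod \ 36m)$ with $3 \nmid h(n)$.

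Next I would set up a density comparison. Let $N_1(X)$ count the negative fundamental discriminants $n$ with $|n| < X$ and $n \equiv a \ (\textmod \ 36m)$. Both admissible conditions on $a$ force $a \equiv 1 \ (\textmod \ 4)$, and together with $(6a,m)=1$ this makes $a$ compatible with $n$ being an odd squarefree discriminant coprime to $m$, so a routine squarefree sieve gives $N_1(X) \sim c\,X$ with $c > 0$. Let $M(X)$ count the cubic fields $K$ with $-X < \Disc(K) < 0$, $\Disc(K) \equiv a \ (\textmod \ 36m)$, and $\Disc(K)$ fundamental. By the reduction, the number of $n$ in the progression that are cubic discriminants is at most $M(X)$, since distinct such $n$ inject into the set of fields counted by $M(X)$ via any single field of that discriminant.

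The heart of the matter is to show $M(X) < N_1(X)$ for large $X$. Counting by $3$-torsion, $M(X) = \tfrac12 \sum_n \big(|\Cl(\Q(\sqrt{n}))[3]| - 1\big)$, where $n$ runs over the discriminants counted by $N_1(X)$. A congruence-refined form of the Davenport--Heilbronn theorem (extending \cite{DH}; see also the averages in \cite{BST}) gives $\sum_n |\Cl(\Q(\sqrt{n}))[3]| \sim \kappa\, N_1(X)$ for an explicit constant $\kappa$, whence $M(X) \sim \tfrac12(\kappa - 1)\,N_1(X)$ and the number of non-cubic discriminants in the progression is at least $\tfrac{3-\kappa}{2}\,N_1(X) + o(X)$. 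Thus it suffices to prove $\kappa < 3$. Unconditionally the average $3$-torsion of imaginary quadratic fields is $2$, so there is room to spare; the point is that imposing $n \equiv a \ (\textmod \ 36m)$ multiplies both counts by local densities whose effect on $\kappa$ must be controlled.

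The hard part will be exactly this local computation at $2$, $3$, and the primes dividing $m$. The conditions $a \equiv 21 \ (\textmod \ 36)$ and $a \equiv 5 \ (\textmod \ 12)$ are precisely what pin down the behaviour of $3$: they force $n \equiv 3 \ (\textmod \ 9)$ (so that $3$ ramifies in $\Q(\sqrt{n})$) or $n \equiv 2 \ (\textmod \ 3)$ (so that $3$ is inert), fixing the ramification type of $3$ and thereby fixing the local factor at $3$ in $\kappa$ in a range keeping $\kappa$ below $3$. Meanwhile $(6a,m)=1$ forces every $p \mid m$ to be unramified in $\Q(\sqrt{n})$, so the local factors at such $p$ are benign and do not disturb the inequality. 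Granting $\kappa < 3$, a positive proportion of the negative fundamental discriminants in the progression fail to be cubic discriminants, which yields infinitely many and proves the lemma. In practice I would extract $\kappa < 3$ directly from the congruence-refined Davenport--Heilbronn asymptotics rather than recompute the relevant Euler product by hand.
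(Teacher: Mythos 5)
Your proposal is correct and follows essentially the same route as the paper: both rest on the quantitative Davenport--Heilbronn theorem in arithmetic progressions and the observation that the average number of cubic fields per negative fundamental discriminant in such a progression is $\tfrac12 < 1$ (your $\kappa = 2$ for the average $3$-torsion is the same statement, transported through the classical Hasse/class-field-theory correspondence between cubic fields of fundamental discriminant $n$ and index-$3$ subgroups of $\Cl(\Q(\sqrt{n}))$). The paper simply cites \cite{TT} for the congruence-refined average rather than spelling out the local density computation you flag as the hard part.
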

\begin{proof}
This follows from quantitative versions of the Davenport-Heilbronn theorem in arithmetic progressions. See \cite{TT}
for sharp quantitative results; alternatively, the methods of \cite{HN} may be adapted to give an easier proof.

In particular, the results stated in \cite{TT} imply that the average number of cubic fields $K$ per negative fundamental discriminant
is equal to $\frac{1}{2}$, and that this average is the same when restricted to any arithmetic progression as above
containing fundamental
discriminants. The lemma follows from the fact that this average is less than 1.
\end{proof}

\begin{proof}[Proof of Theorem \ref{thm_main}]
For simplicity we begin by proving the theorem for the
negative discriminant zeta function $\xi^-(s)$. The basic idea is the same for all cases.

For reasons that will become apparent later, we begin by subtracting the Riemann zeta function $\zeta(s)$.
Suppose that the function $\xi^-(s) - \zeta(s)$ has a representation
\begin{equation}\label{eqn_def_an}
\xi^-(s) - \zeta(s) = \sum_n a(n) n^{-s} =  
\sum_n \bigg( \sum_{i = 1}^k c_i b_i(n) \bigg) n^{-s},
\end{equation}
where the $c_i$ are nonzero real numbers and the $b_i(n)$ are multiplicative functions. We will obtain a contradiction,
implying Theorem \ref{thm_main}.

Begin by using Lemma \ref{lem_dh} to choose $k$ odd negative fundamental discriminants $n_i \equiv 21 \ (\textmod \ 36)$,
coprime to each other apart from the common factor of 3, for which there are no cubic fields of discriminant $n_i$.

Now, for $1 \leq i \leq k$, choose primes $p_i \equiv 1 \ (\textmod \ 3)$ for which 
$\big( \frac{-3 n_i}{p_j} \big) = \big( \frac{n_i}{p_j} \big) = -1$
for $i \neq j$, and $\big( \frac{-3 n_i}{p_i} \big) = \big( \frac{n_i}{p_i} \big) = 1$ for each $i$.
(These conditions amount to arithmetic progressions mod $\prod n_i$.) We will count the number of
cubic rings with discriminant $n_i$, $n_i p_i^2$, and $n_i p_j^2$, for each $i$ and $j$.

For each $n_i$, there is exactly one quadratic field of discriminant $n_i$. There is exactly one nonmaximal
reducible ring of discriminant $n_i p_j^2$ for $j \neq i$, and
three such rings of discriminant $n_i p_i^2$; these nonmaximal rings are counted by \eqref{eqn_dw2}.

By hypothesis, there are no cubic fields of discriminant $n_i$. By \eqref{eqn_cm_im}, there are no
cubic fields (or nonmaximal rings) of discriminant $n_i p_j^2$ for $j \neq i$, and there is one cubic field
of discriminant $n_i p_i^2$.

As the Shintani zeta function counts noncyclic cubic fields with weight 2, and quadratic rings with weight 1,
we conclude that $a(n_i p_j^2) - a(n_i)$ is equal to zero if $i \neq j$, and is positive if $i = j$. This fact 
is enough to contradict \eqref{eqn_def_an}. To see this, 
let $B$ be
the $k \times k$ matrix with $(r, s)$-entry $c_s b_s(n_r)$, where $c_s$ and $b_s(n)$ are as in \eqref{eqn_def_an},
and let $C_i$ be the vector (e.g., the $n \times 1$ matrix) whose $r$th row is equal to $b_r(p_i^2) - b_r(1)$.
Then $B C_i$ is the vector whose $r$th row is equal to 
\begin{equation}
\sum_{\ell = 1}^k c_{\ell} b_{\ell}(n_r) \big(
b_{\ell}(p_i^2) - b_\ell(1) \big) =
\sum_{\ell = 1}^k \Big( c_{\ell} b_{\ell} (n_r p_i^2) - c_{\ell} b_{\ell}(n_r) \Big)
= a(n_r p_i^2) - a(n_r).
\end{equation}
This is equal to zero if and only if $i \neq r$, and hence
the column matrices $B C_i$ are linearly independent over $\R$, and so $B$ is invertible.

Now write $C'$ for the vector consisting of $k$ ones, so that the $r$th row of
$B C'$ is equal to $a(n_r)$. We chose $n_r$ so that there are no cubic fields of discriminant
$n_r$ for any $r$, so that the Shintani zeta function counts only the quadratic field of discriminant $n_r$.
By \eqref{eqn_def_an}, $a(n_r) = 0$ for each $n_r$, and hence $B C' = 0$.
But this contradicts the invertibility of $B$; therefore the representation \eqref{eqn_def_an} cannot exist.
\\
\\
{\itshape Positive discriminants.}
To prove our result for the positive Shintani zeta function, we again
define $a(n)$ as in \eqref{eqn_def_an}, again
subtracting $ \zeta(s)$. We choose positive fundamental discriminants $n_i \equiv 21 \ (\textmod \ 36)$ as before,
with no other prime factors in common, and
such that there are no cubic fields of discriminant $- n_i/3$. It follows (by \eqref{eqn_cm_re}, or more simply by
the Scholz reflection principle) that there are also no cubic fields of discriminant $n_i$.
We choose primes $p_i$ as
before, only this time we require that $\omega_{n_i}(p_i) = 2$ so that there will again be a cubic field of discriminant
$n_i p_i^2$. This condition holds if $p_i$ splits completely in $E(\sqrt{-3 n_i})$, where $E$ is as in 
\eqref{eqn_cm_re}. Roughly speaking, we can find such a $p_i$ because this condition is non-abelian.

More precisely, we want to choose $p_i$ so that $\big( \frac{-3 n_i}{p_i} \big) = 1$ (implied if
$p_i$ splits completely in $E(\sqrt{-3 n_i})$), that $\big( \frac{n_i}{p_i} \big) = 1$ (implied by the previous statement
when
$p \equiv 1 \ (\textmod \ 3)$ and therefore $\big( \frac{-3}{p_i} \big) = 1$), and that 
$\big( \frac{n_j}{p_i}) = -1$ for $j \neq i$. As the extensions $E(\sqrt{-3 n_i})$, $\Q(\sqrt{-3})$, and $\Q(\sqrt{n_j})$ ($j \neq i$) are
all Galois and disjoint over $\Q$, the Galois group of their compositum is the direct product of their
Galois groups, and thus the Chebotarev density theorem implies that these splitting conditions may all
be simultaneously satisfied.

Finally, note that the quadratic fields and rings may be handled exactly as in
the negative discriminant case, and so the remainder
of the proof is unchanged.

\end{proof}

\subsection{Generalizations of our results}
Our proof generalizes with minimal modification to some additional Dirichlet series related to Shintani's zeta functions. We now describe examples of such
Dirichlet series and the modifications required.
\\
\\
{\itshape The Dirichlet series for cubic fields.} A Dirichlet series related to the Shintani zeta function is that
simply counting cubic fields:
\begin{equation}
F^{\pm}(s) := \sum_{\substack{[K : \Q] = 3 \\ \pm \Disc(K) > 0}} |\Disc(K)|^{-s}.
\end{equation}
This does not seem to have any nice analytic properties (such as meromorphic continuation to $\mathbb{C}$),
but we cannot rule such properties out. Needless to say, if $F^{\pm}(s)$ had such analytic properties then
this would have interesting consequences for the distribution of cubic fields. For example, this would presumably
yield another proof of the Davenport-Heilbronn theorem, possibly with better error terms than are currently known.

Our argument also proves that neither series $F^{\pm}(s)$ can be represented as a finite sum of Euler
products. The proof is exactly as before, only easier because there are no quadratic fields to count.
We write
$\sum_n a(n) n^{-s} = F^{\pm}(s)$ instead of subtracting $\zeta(s)$ as in \eqref{eqn_def_an}; we thus
have $a(n_i) = a(n_i p_j^2) = 0$ for each $n_i$ when $j \neq i$, and $a(n _i p_i^2) > 0$, and hence the conclusion of the argument works.

The same argument also works if each field is counted with the weight $\frac{1}{|\Aut(K)|}$; indeed, our proof does not
see any cyclic cubic fields.
\\
\\
{\itshape Linear combinations.} Suppose that $\xi(s) = C_1 \xi^+(s) + C_2 \xi^-(s)$ is a linear combination of the usual
Shintani zeta functions, with $C_1 \cdot C_2 \neq 0$. Of particular interest are the cases $C_1 = \sqrt{3}, \ C_2 = \pm 1$,
as the resulting zeta functions have particularly nice functional equations \cite{DW2, N, O}.
We can prove once again that $\xi(s)$ is not a finite sum of Euler products:
We choose negative fundamental discriminants
$n_i$ as above, and the same proof works exactly. (Instead of subtracting $\zeta(s)$ we instead subtract
$C_2 \zeta(s)$.) The point is that all of the $n$ considered in the proof are $\equiv 3 \ (\textmod \ 4)$,
corresponding to fields and nonmaximal rings of discriminant $-n$, 
and so no positive fundamental discriminants enter our calculations. We could similarly work only with positive discriminants
and ignore the negative ones.

\end{document}